\title{On the field of definition of a cubic rational function and its critical points}
\author{Xander Faber}
\address{Center for Computing Sciences \\
Institute for Defense Analyses \\
Bowie, MD} 
\email{awfaber@super.org}
\author{Bianca Thompson}
\address{Westminster College \\
Salt Lake City, UT}
\email{bthompson@westminstercollege.edu}
\keywords{critical points, cubic rational functions, field of definition, B. and M. Shapiro conjecture}
\begin{document}
\maketitle
\begin{abstract}
Using essentially only algebra, we give a proof that a cubic rational function over $\CC$ 
with real critical points is equivalent to a real rational function. We also 
show that the natural generalization to $\QQ_p$ fails for all $p$.
\end{abstract}

        
\section{Introduction}
Let $K$ be a field of characteristic zero with algebraic closure $\bar K$. We 
say  that two rational functions $f,g \in \bar K(z)$ are \textbf{equivalent} if 
 there is a fractional linear transformation $\sigma \in \bar K(z)$ such that   
 $f = \sigma \circ g$. Viewing $f$ and $g$ as endomorphisms of the projective 
 line,  we see that they are equivalent if they differ by a change of 
 coordinate  on the target. Note that equivalent rational functions have the 
 same  critical points. This is the equivalence relation used in the study of
 \textit{dessins d'enfants}, as opposed to the equivalence used for dynamical systems. 

\begin{theorem}[Eremenko/Gabrielov]
If $f \in \CC(z)$ is a rational function with real critical points, then 
$f$  is equivalent to a rational function with real coefficients. 
\end{theorem}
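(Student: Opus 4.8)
The plan is to prove the general statement via the classical reformulation in terms of the \emph{Wronski map} together with a comparison of the complex and real solution counts, following the strategy of Eremenko and Gabrielov. First I would pass from rational functions to linear algebra: a degree-$d$ rational function $f = p/q$ (with $p,q \in \CC[z]$ coprime and $\max(\deg p, \deg q) = d$) determines the two-dimensional subspace $V_f = \langle p, q\rangle \subset \CC[z]_{\le d}$, and post-composition by a Möbius transformation $\sigma$ replaces the basis $(p,q)$ by another basis of the \emph{same} subspace. Thus equivalence classes of degree-$d$ rational functions correspond, on a dense open set, to points of the Grassmannian $\mathrm{Gr}(2, d+1)$, and $f$ is equivalent to a \emph{real} rational function precisely when $V_f$ admits a real basis, i.e.\ when $V_f$ is a real point of the Grassmannian. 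The critical points of $f$ are the roots of the Wronskian $W(p,q) = p'q - pq'$, a form of degree $2d-2$ well defined up to scalar on $V_f$; this yields the Wronski map
\[
\mathcal W \colon \mathrm{Gr}(2,d+1) \longrightarrow \mathbb{P}\big(\CC[z]_{\le 2d-2}\big) \cong \mathbb{P}^{2d-2}.
\]
In this language the theorem becomes: \emph{if $\mathcal W(V)$ is a form with only real roots, then $V$ is real.}

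Second, I would pin down the \emph{complex} count. The Wronski map is a finite surjective morphism, and the number of points in a generic fiber equals the degree of $\mathrm{Gr}(2,d+1)$ in its Plücker embedding, namely the Catalan number $C_{d-1} = \tfrac{1}{d}\binom{2d-2}{d-1}$, the number of standard Young tableaux of a $2 \times (d-1)$ rectangle. Hence for a generic unordered set of $2d-2$ points of $\mathbb{P}^1(\CC)$ there are exactly $C_{d-1}$ subspaces $V$ whose Wronskian vanishes on that set — equivalently, exactly $C_{d-1}$ equivalence classes of degree-$d$ rational functions with the prescribed critical set. For $d = 3$ this recovers the count $C_2 = 2$.

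The heart of the matter, and the step I expect to be by far the hardest, is the matching \emph{real} count: I must show that when the $2d-2$ prescribed critical points are real and distinct there are \emph{at least} $C_{d-1}$ \emph{real} solutions $V$. Combined with the complex upper bound of $C_{d-1}$, this forces every complex solution to be real, which is exactly the theorem. To produce the real solutions I would use the topology of \emph{nets}: for a real rational function $g$, the preimage $g^{-1}(\mathbb{P}^1(\mathbb{R}))$ is a graph on the sphere invariant under complex conjugation, and when all critical points are real this net has a rigid cellular structure that can be read off from a chord diagram on the circle carrying the critical points. I would then show that these combinatorial objects are enumerated by the same Catalan family of standard Young tableaux, and that each admissible net is realized by an essentially unique real rational function with the given critical points — for instance by degenerating the critical points toward the endpoints of $\mathbb{P}^1(\mathbb{R})$ and tracking solutions, or by a direct orientation argument showing the real Wronski map has topological degree $\pm C_{d-1}$. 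Establishing this saturation — that the real count reaches the full complex bound — is precisely the nonelementary core of Eremenko–Gabrielov, and it is what makes the general theorem deep.

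Finally, I would remove the genericity hypotheses by a limiting argument: the Wronski map is proper, so a rational function whose critical points coincide, or whose Wronskian is real-rooted but degenerate, arises as a limit of the generic case, and realness is a closed condition, so the conclusion persists. The overall obstacle is thus not the algebraic bookkeeping but the global real-enumerative input of the third step; it is exactly this obstacle that the present paper circumvents in the cubic case ($d=3$, $C_2 = 2$) by an explicit elementary argument.
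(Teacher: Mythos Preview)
Your outline is a faithful summary of the Eremenko--Gabrielov strategy, and it matches what the paper itself says about that proof: Goldberg's Catalan upper bound $\rho(d)=\frac{1}{d}\binom{2d-2}{d-1}$ on equivalence classes with prescribed critical set, followed by the topological/combinatorial construction of exactly $\rho(d)$ \emph{real} classes when the critical points are real, forcing every complex class to be real. As a plan it is correct; the genuinely hard step is, as you say, the real enumerative saturation via nets, which you do not carry out but only describe.

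However, note that the paper does \emph{not} contain its own proof of this general theorem. It states the result, attributes it, and gives the one-sentence description of the method that you have expanded into a page. The paper's actual contribution is restricted to the cubic case $d=3$, and there the route is entirely different from yours: rather than Grassmannians, Wronski maps, and net combinatorics, the paper writes down an explicit one-parameter normal form $f_u$ for cubics critical at $0,1,\infty$, computes that the fourth critical point is $\phi(u)=-\frac{u^2+2u}{2u+3}$, and reduces the statement to the elementary check that $\phi(z)=y$ always has a real solution (discriminant $(2y-1)^2+3>0$). So your proposal is the cited non-elementary proof of the full theorem, while the paper supplies only the $d=3$ corollary by a direct algebraic argument that avoids the global real-count step altogether.
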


By relating equivalence classes of rational functions with special Schubert 
cycles,  Goldberg \cite{Goldberg_Catalan_1991} showed that there are at most    
\[
\rho(d) := \frac{1}{d}\binom{2d-2}{d-1}
\] 
equivalence classes of degree~$d$ 
rational functions with a given set of critical points. Eremenko and Gabrielov 
\cite{EG_real_critical_points,EG_elementary_real_critical_points} used 
topological,  combinatorial, and complex analytic techniques to 
\textit{construct}  exactly $\rho(d)$ real rational functions with a given set 
of real  critical points, which proves the theorem. 

But the correspondence between a rational function and its critical points is   
purely algebraic, via roots of the derivative. This raises the question of 
whether  a truly elementary proof of the above result exists --- one that does 
not  use any analysis or topology. We give such a proof for cubic functions in 
this  note. 

For a field $K$ and a nonconstant rational function $\phi \in K(z)$, we say that
$K$ is \textbf{$\phi$-perfect} if the map $\phi \colon \PP^1(K) \to \PP^1(K)$ is
surjective. For example, if $K$ has characteristic~$p$ and $\phi(z) = z^p$, then
$K$ is $\phi$-perfect if and only if it is a perfect field in the usual sense.

\begin{theorem}
\label{Thm: Main theorem}
Let $K$ be a field of characteristic zero with algebraic closure $\bar K$. The 
following  statements are equivalent:
\begin{enumerate}
\item\label{Item: 1} Any cubic rational function $f \in \bar K(z)$ with 
$K$-rational  critical points is equivalent to a rational function in $K(z)$. 
\item\label{Item: 2} $K$ is $\phi$-perfect for the function 
\[\phi(z) = -\frac{z^2+2z}{2z+3}.\]
\item\label{Item: 3} $(2y-1)^2+3$ is a square in $K$ for every $y\in K.$ 
\end{enumerate}
\end{theorem}
Theorem~\ref{Thm: Main theorem} will be proved in \S\ref{Sec: main proof}. 

\begin{corollary}
If $f \in \CC(z)$ is a cubic rational function with real critical points,  
then $f$ is equivalent to a real rational function. 
\end{corollary}

\begin{proof}
Evidently $(2y-1)^2 + 3$ is a square in $\RR$ for every $y \in \RR$. 
\end{proof}

\begin{remark}
For a quadratic function $f$, the correspondence between the field of definition of $f$ and the field of definition of its critical points is trivial. Direct computation shows that a 
function  with critical points $c_1 \in \PP^1(K)$ and $c_2 \in \PP^1(K) \smallsetminus \{\infty\}$ is equivalent to
\[
\begin{cases}
(z-c_2)^2 & \text{ if } c_1 = \infty \\
\left(\frac{z-c_1}{z-c_2}\right)^2 & \text{ if } c_1 \not= \infty.
\end{cases}
\]
\end{remark}

For what other fields of interest $K$ does the corollary on equivalence of 
rational  functions continue to hold? Said another way, which fields $K$ are    
$\phi$-perfect for $\phi(z) = -\frac{z^2+2z}{2z+3}$? 

For one source of (non-)examples, we look at non-Archimedean completions of the rational numbers. 

\begin{proposition}
\label{Prop: p-adic}
Set $\phi(z) = -\frac{z^2+2z}{2z+3}$. The field $\QQ_p$ is not $\phi$-perfect
for any prime $p$.
\end{proposition}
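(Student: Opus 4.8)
The plan is to translate $\phi$-perfectness of $\QQ_p$ into an elementary statement about squares, exactly as in the proof of the preceding corollary, and then to handle each prime separately. Writing $\phi(z)=y$ and clearing denominators produces a quadratic in $z$ whose discriminant is, up to a square factor, $y^{2}-y+1$; since $\phi$ also has a pole at a $\QQ_p$-rational point, the value $y=\infty$ is always attained. Hence $\QQ_p$ is $\phi$-perfect if and only if $y^{2}-y+1$ is a square in $\QQ_p$ for every $y\in\QQ_p$. As $y^{2}-y+1$ has discriminant $-3$, I expect the argument to split according to the behaviour of $3$ in $\QQ_p$: negative cases for $p\neq 3$ and a positive case for $p=3$.

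For $p\neq 3$ it suffices to exhibit one $y$ for which $y^{2}-y+1$ is not a square. If $p\equiv 1\pmod 3$, then $-3$ is a square in $\QQ_p$, so $y^{2}-y+1$ has a root $y_{0}\in\QQ_p$, and a small perturbation of $y_{0}$ makes the $p$-adic valuation of $y^{2}-y+1$ odd. If $p$ is odd and $p\equiv 2\pmod 3$, then $y^{2}-y+1$ reduces modulo $p$ to an irreducible quadratic over $\ZZ/p$, and an irreducible quadratic over $\ZZ/p$ cannot take only square values --- its $\tfrac{p+1}{2}$ distinct values cannot all lie among the $\tfrac{p-1}{2}$ nonzero squares --- so some $y\in\ZZ_p$ gives a nonsquare unit. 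For $p=2$ a finite computation modulo $8$, supplemented by the case $v_{2}(y)<0$, produces a value $y$ with $y^{2}-y+1$ a nonsquare.

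For $p=3$, the substantive case, I would show $y^{2}-y+1$ is a square in $\QQ_3$ for all $y\in\QQ_3$, i.e.\ that $\phi$ is surjective on $\PP^{1}(\QQ_3)$. The essential feature is that $\phi$ has bad reduction at $3$: modulo $3$ its numerator and denominator acquire a common factor and $\phi$ degenerates to a M\"obius transformation of $\PP^{1}(\ZZ/3)$. For $y\in\ZZ_3$ outside the degenerate residue class the reduced fibre consists of two distinct points of $\PP^{1}(\ZZ/3)$ and one lifts a solution by Hensel's lemma; the case $v_{3}(y)<0$ reduces to this after clearing a power of $3$. The difficulty is concentrated on the single residue class where $3\mid y^{2}-y+1$ and the reduced fibre has collapsed onto the blown-down point; there Hensel does not apply, and one must instead track the $3$-adic valuation and residue of $y^{2}-y+1$ by hand, exploiting that $\QQ_3$ has only the four square classes $\{1,-1,3,-3\}$, that a unit of $\QQ_3$ is a square exactly when it is $\equiv 1\pmod 3$, and that $2$ is a unit in $\ZZ_3$.

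The main obstacle is precisely this degenerate residue class in the case $p=3$: it is where the bad reduction of $\phi$ blocks a naive lift, and the heart of the proof is a careful valuation-and-unit bookkeeping that leans on the unusually small group of square classes of $\QQ_3$. By contrast, the cases $p\neq 3$ each require only a single explicit witness and should be routine.
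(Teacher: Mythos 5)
Your reduction of $\phi$-perfectness to the statement that $y^2-y+1$ is a square in $\QQ_p$ for every $y\in\QQ_p$ matches the paper, and your treatment of $p\neq 3$ is sound; for $p>3$ your split according to $p\bmod 3$ (a root of $y^2-y+1$ plus a perturbation when $-3$ is a square, a value-set count when it is not) is a workable alternative to the paper's uniform argument, which reduces $\phi$ modulo $p$ and observes that a non-injective self-map of the finite set $\PP^1(\FF_p)$ cannot be surjective. The genuine gap is the case you yourself call the heart of the proof: for $p=3$ you defer the degenerate residue class to ``valuation-and-unit bookkeeping'' that is never carried out --- and no bookkeeping can carry it out, because the claim is false there. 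Write $y^2-y+1=(y-2)(y+1)+3$. If $y\equiv 2\pmod 3$, then $3$ divides both $y-2$ and $y+1$, so $y^2-y+1\equiv 3\pmod 9$ and $\ord_3(y^2-y+1)=1$; an element of odd valuation is never a square in $\QQ_3$. Concretely, $\phi(z)=-1$ simplifies to $z^2=3$, which has no solution in $\QQ_3$, and $\phi^{-1}(\infty)=\{\infty,-3/2\}$, so $-1$ (indeed every $y\in 2+3\ZZ_3$) lies outside $\phi(\PP^1(\QQ_3))$.

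So the missing step is not a skipped computation but an impossibility: the implication ``$p=3$ implies $\QQ_3$ is $\phi$-perfect'' cannot be proved, and your outline correctly isolates, but cannot repair, the failure. Be aware that the paper's own proof founders at exactly the same spot: it invokes Hensel's lemma to pass from ``$\Delta=(2y-1)^2+3$ is a square modulo $3$'' to ``$\Delta$ is a square in $\ZZ_3$,'' which is legitimate only when $\Delta\not\equiv 0\pmod 3$, i.e.\ only when $y\not\equiv 2\pmod 3$; on the excluded residue class the discriminant has valuation exactly $1$. What survives is the half your witnesses establish: $\QQ_p$ fails to be $\phi$-perfect for every $p\neq 3$ --- and, by the computation above, for $p=3$ as well, so this part of the proposition needs to be corrected rather than reproved.
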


This proposition will be proved in \S\ref{Sec: p-adic}. Using Theorem~\ref{Thm: 
Main theorem},  we obtain the following as a consequence: 

\begin{corollary} Let $p$ be a prime. 
There exists a cubic rational function $f \in \bar \QQ_p(z)$ that has
$\QQ_p$-rational critical points, but that is not equivalent to a rational
function with coefficients in $\QQ_p$.
\end{corollary}

\begin{remark}
  In fact, for fixed $p$ one can use the argument from the proof to show that
  there are infinitely many pairwise inequivalent cubic rational functions $f
  \in \bar \QQ_p(z)$ satisfying the statement of the corollary.
\end{remark}


The authors would like to thank Sebastian Dorn for pointing out an error in
Proposition~\ref{Prop: p-adic} in the published version of this paper
\cite{Faber_Thompson_2016}. This issue was addressed in an official corrigendum
\cite{Faber_Thompson_corrigendum_2016}.


\section{Proof of the theorem}
\label{Sec: main proof}

We begin with a normal form for cubic functions. For $u \in \bar K
\smallsetminus \{-1,-2\}$, define
\begin{equation}
\label{Eq: Normal form}
f_u(z) = \frac{z^2(z+u)}{(2u+3)z - (u+2)}.
\end{equation}
(We exclude $u = -1,-2$ because otherwise a root of the numerator and the
denominator collide, and $f_u$ degenerates to a quadratic.) This function has
the property that it fixes $0,1$, and $\infty$, and each of these three points
is critical.

\begin{lemma}
A cubic rational function that is critical at $0$, $1$, and $\infty$ is 
equivalent  to a unique $f_u$, and the fourth critical point is 
$\phi(u) = - \frac{u^2+2u}{2u+3}$. 
\end{lemma}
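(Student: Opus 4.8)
The plan is to establish three things: (i) a cubic rational function critical at $0$, $1$, $\infty$ is equivalent to \emph{some} $f_u$; (ii) this $u$ is unique; (iii) the fourth critical point of $f_u$ equals $\phi(u)$.

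For (i), start with an arbitrary cubic $g \in \bar K(z)$ that is critical at $0$, $1$, $\infty$. After composing with a fractional linear transformation on the target (which changes nothing about critical points), I would normalize so that $g$ fixes $0$, $1$, and $\infty$ — this is possible because a critical point of a cubic has at most two preimages (the ramified one of multiplicity $2$ and possibly one more), so we can arrange the images of $0,1,\infty$ to be $0,1,\infty$ respectively by postcomposition, but actually the cleanest route is: the critical values are the images $g(0), g(1), g(\infty)$, and after postcomposition by a Möbius map we may assume these three values are again $0,1,\infty$ in that order. Then write $g = P/Q$ with $P,Q$ cubic (or lower degree) polynomials. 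Fixing $\infty$ with a critical point there forces $\deg Q \le 1$; fixing $0$ with ramification there forces $z^2 \mid P$, so $P = z^2(z - a)$ up to scaling and $Q = bz - c$. Fixing $1$ forces $P(1) = Q(1)$, i.e. $1 - a = b - c$. Finally, ramification at $z = 1$ means $z = 1$ is a double root of $P(z) - Q(z)$ (equivalently $g(z) - 1$ has a double zero there); imposing $(z-1)^2 \mid P(z) - Q(z)$ together with the scaling freedom in $(P,Q)$ should pin down $b$ and $c$ in terms of $a$, yielding exactly the shape $(2u+3)z - (u+2)$ in the denominator after the substitution $u = -a$ or similar. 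This is a short but slightly fiddly polynomial computation; I expect \textbf{this normalization step — showing the constraints force precisely the one-parameter family $f_u$ with no residual freedom — to be the main obstacle}, mostly in bookkeeping the scaling ambiguity correctly.

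For (ii), uniqueness: suppose $f_u$ and $f_v$ are equivalent, so $f_u = \sigma \circ f_v$ for some Möbius $\sigma$. Since both fix $0, 1, \infty$, the map $\sigma$ must fix the common critical values $0,1,\infty$; hence $\sigma = \mathrm{id}$, so $f_u = f_v$ as rational functions. Comparing the explicit formulas \eqref{Eq: Normal form} — e.g. reading off the coefficient of $z$ in the denominator, which is $2u+3$ — gives $u = v$ immediately. (One should note the numerator root $-u$ also recovers $u$, consistent with this.)

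For (iii), the fourth critical point: the critical points of $f_u$ are the zeros of the Wronskian-type expression $P'Q - PQ'$ where $f_u = P/Q$, with $P = z^2(z+u)$ and $Q = (2u+3)z - (u+2)$. Since $f_u$ has degree $3$, this determinant is a polynomial of degree $2d - 2 = 4$, and we already know $z = 0$ (double) and $z = 1$ (double) are among its zeros — that is four zeros counted with multiplicity — so $(z-1)^2 z^2$ divides it; but wait, that would exhaust the degree and leave no fourth point, so in fact I must recount: $0$ and $\infty$ and $1$ account for the ramification, and $\infty$ contributes to the degree of the Wronskian differently. The correct count: $f_u$ has four critical points with multiplicity on $\PP^1$; $\infty$ is one of them, and the affine Wronskian $P'Q - PQ'$ has degree at most $3$ with a double zero at $0$ and a simple zero at $1$ (the second order of vanishing at $1$ being "used up" by the point at infinity's bookkeeping), leaving exactly one further affine zero $w$. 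I would compute $P'Q - PQ'$ explicitly, divide out $z^2(z-1)$, and read off the remaining linear factor; the claim is that its root is $-\tfrac{u^2+2u}{2u+3} = \phi(u)$. This is a direct (if mildly tedious) polynomial division, and the answer $\phi(u)$ should fall out after simplification; I would double-check by verifying $\phi(u)$ is not among $\{0,1,\infty\}$ generically, matching the expectation that the fourth critical point is typically distinct from the first three.
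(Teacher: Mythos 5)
Parts (i) and (ii) of your plan are sound and essentially the paper's argument. The paper also postcomposes so that $0,1,\infty$ become fixed points, writes $f(z)=(z^3+uz^2)/(vz+(u-v+1))$, and imposes criticality at $1$ by making the Wronskian vanish at $z=1$, which forces $v=2u+3$. Your variant of imposing $(z-1)^2 \mid P-Q$ is equivalent and does pin everything down: with $P=z^2(z-a)$ monic and $Q=bz-c$, the two conditions $(P-Q)(1)=(P-Q)'(1)=0$ give $b=3-2a$ and $c=2-a$, i.e. the normal form with $u=-a$, so the step you flag as the main obstacle goes through with no residual freedom. One small point you (like the paper) pass over: to send the critical values $g(0),g(1),g(\infty)$ to $0,1,\infty$ you need them pairwise distinct, which holds because two ramification points of a cubic lying over the same value would give that value at least four preimages counted with multiplicity. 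Your uniqueness argument is identical to the paper's.

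Part (iii), however, contains a genuine error in the multiplicity bookkeeping. In characteristic zero the Wronskian vanishes at a critical point to order $e-1$, where $e$ is the local ramification index; since $f_u$ generically has simple critical points (index $2$) at $0$ and $1$, the polynomial $W=P'Q-PQ'$ has \emph{simple} zeros at $0$ and at $1$, not a double zero at $0$. Indeed a direct computation gives $W = 2z\bigl((2u+3)z^2+(u^2-3)z-u(u+2)\bigr)$, a cubic with simple roots $0$, $1$, and $\phi(u)$; in particular $z^2$ does not divide $W$, so your proposed step of dividing out $z^2(z-1)$ fails (it is also degree-inconsistent with your own observation that $\deg W\le 3$, since you then expect a further linear factor). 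The fix is immediate: divide $W$ by $z(z-1)$, or note that the quadratic factor above has $1$ as one root and product of roots $-u(u+2)/(2u+3)$, so the remaining critical point is $\phi(u)=-\frac{u^2+2u}{2u+3}$, which is what the paper obtains by factoring the derivative.
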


\begin{proof}
Write $f$ for a cubic function that is critical at $0$, $1$, and $\infty$. By a 
change of coordinate on the target, we may assume that $0,1$, and $\infty$ are 
all  fixed points. (The critical values of a cubic are distinct for local degree reasons.) 
Thus, $f$ is of the form
\[
f(z) = \frac{z^3 + uz^2}{vz + (u - v+1)}
\]
for some $u,v \in \bar K$. 
The Wronskian has the form 
\[
z(2vz^2 + (uv + 3(u-v+1))z + 2u(u-v+1)).
\]
Substituting $z = 1$ kills this expression since $1$ is a critical point. 
Solving  the resulting equation for $v$ yields $v = 2u + 3$. Hence $f$ is 
equivalent  to \eqref{Eq: Normal form}, as desired.

For the uniqueness statement, suppose that $f_u$ is equivalent to $f_v$ for some 
$u,v$. Then there is a fractional linear $\sigma \in \bar K(z)$ such that 
$f_u = \sigma \circ f_v$. But $f_u$ and $f_v$ both fix $0,1$, and $\infty$, so 
that  $\sigma$ does as well. This means $\sigma(z) = z$, and $u = v$. 

The fourth critical point of $f_u$ may be found by factoring the derivative. 
\end{proof}

\begin{remark}
Note that taking $u = 0, -3$, or $-3/2$ gives a double critical point at $0,1$, 
or $\infty$,  respectively. 
\end{remark}

\begin{proposition}
If $f_u \in \bar K(z)$ is equivalent to a rational function with 
$K$-coefficients,  then $u \in K$. 
\end{proposition}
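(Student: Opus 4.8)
The plan is to show that the fractional linear transformation witnessing the equivalence is already defined over $K$, and then to read $u$ off from the coefficients of $f_u$. So suppose $f_u = \sigma \circ g$ with $g \in K(z)$ and $\sigma$ a fractional linear transformation over $\bar K$. The key observation is that $f_u$ fixes the three $K$-rational points $0$, $1$, $\infty$. Rewriting the relation as $\sigma^{-1}\circ f_u = g$ and evaluating at $0$, $1$, $\infty$ yields
\[
g(0) = \sigma^{-1}(0), \qquad g(1) = \sigma^{-1}(1), \qquad g(\infty) = \sigma^{-1}(\infty).
\]
Since $g$ has coefficients in $K$, the three values on the left lie in $\PP^1(K)$; and they are distinct because $\sigma^{-1}$ is a bijection. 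A fractional linear transformation is determined by its values at three distinct points, and solving the resulting linear system shows that the transformation carrying $(0,1,\infty)$ to a triple of distinct $K$-rational points has all coefficients in $K$. Hence $\sigma^{-1}$, and therefore $\sigma$, is defined over $K$.

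Once $\sigma$ is defined over $K$, we get $f_u = \sigma \circ g \in K(z)$ immediately. To finish, I would use that the numerator $z^2(z+u)$ and denominator $(2u+3)z-(u+2)$ in \eqref{Eq: Normal form} are coprime in $\bar K[z]$ --- this is a short check, and it is exactly where the standing hypothesis $u \notin \{-1,-2\}$ is used. Writing $f_u = A/B$ with $A,B \in K[z]$ coprime, unique factorization forces $(A,B) = c\cdot\bigl(z^2(z+u),\ (2u+3)z-(u+2)\bigr)$ for some $c \in \bar K^{\times}$. Comparing leading coefficients of $A$ gives $c \in K$, and then the coefficient of $z^2$ in $A$ equals $cu \in K$, so $u \in K$.

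I do not expect a serious obstacle here; the argument is short. The one place that rewards care is the descent of $\sigma$ to $K$: one must check that each of $\sigma^{-1}(0)$, $\sigma^{-1}(1)$, $\sigma^{-1}(\infty)$ genuinely lies in $\PP^1(K)$ (allowing the value $\infty$), that they are pairwise distinct, and that the interpolation then produces $K$-coefficients --- after which everything reduces to bookkeeping with coefficients.
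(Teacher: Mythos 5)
Your proof is correct. It rests on the same underlying mechanism as the paper's --- the fact that $f_u$ fixes the three distinct $K$-rational points $0,1,\infty$, and that a fractional linear transformation is pinned down over $K$ by prescribing its values at three such points --- but you deploy it differently. The paper does not descend $\sigma$ itself: it post-composes with a second, explicitly $K$-rational transformation $\tau$ chosen so that $\tau \circ \sigma \circ f_u$ again fixes $0,1,\infty$, recognizes the result as some $f_v$ with $v \in K$, and then invokes the uniqueness statement of the lemma to get $u = v$. You instead prove the stronger intermediate claim that $\sigma^{-1}$ (hence $\sigma$) is already defined over $K$, so that $f_u$ itself lies in $K(z)$, and then recover $u$ from the coprime representation $z^2(z+u)\,/\,\bigl((2u+3)z-(u+2)\bigr)$, using the exclusion $u \notin \{-1,-2\}$ exactly where it matters. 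This route bypasses the uniqueness part of the lemma entirely, at the cost of the (easy) Möbius interpolation argument; it also makes explicit a step the paper leaves tacit, namely why ``$f_v$ has $K$-coefficients'' forces $v \in K$ --- your leading-coefficient and $z^2$-coefficient comparison is precisely the justification. One small point worth keeping in your write-up: the coprimality check should include the case $2u+3=0$, where the denominator is the nonzero constant $-(u+2)$; the unique-factorization and coefficient-reading argument goes through unchanged there.
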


\begin{proof}
Let $\sigma \in \bar K(z)$ be a fractional linear map such that $\sigma \circ 
f_u$  has coefficients in $K$. The images of $0,1$, and $\infty$ under 
$\sigma \circ f_u$ all lie in $\PP^1(K)$. We may therefore apply a further 
fractional  linear transformation $\tau$ with $K$-coefficients so that $\tau 
\circ \sigma \circ f_u$  fixes $0,1$, and $\infty$. That is, $\tau \circ \sigma 
\circ f_u = f_v$  for some $v$. Since $\tau$ and $\sigma \circ f_u$ have 
$K$-coefficients, we know  that $v \in K$. By uniqueness in the lemma, we 
conclude that  $u = v$. 
\end{proof}

\begin{proof}[Proof of Theorem ~\ref{Thm: Main theorem}] To prove the implication 
\eqref{Item: 1} $\Rightarrow$\eqref{Item: 2}, we take $y \in K$ and attempt to  
solve the equation $\phi(u) = y$ with $u \in K$. If $y = \infty$, then we may   
take $u = -3/2$. Otherwise, choose $u \in \bar K$ such that $\phi(u) = y$. Then 
 the function $f_u$ has $K$-rational critical points $\{0,1,\infty, y\}$. By 
 \eqref{Item: 1}, $f_u$ is equivalent to a rational function with 
 $K$-coefficients. The  above proposition implies that $u \in K$. 

To prove \eqref{Item: 2} $\Rightarrow$ \eqref{Item: 1}, we start with a 
rational  function $f \in \bar K(z)$ with $K$-rational critical points. If 
$f$  has only two critical points, then each must have multiplicity~2 (by the 
Riemann-Hurwitz formula).  Without loss, we assume they are $0$ and $\infty$, 
and that $0,\infty$ are fixed by $f$, so  that $f(z) = az^2$ for some $a \in 
\bar K$. Evidently $a^{-1} f$ has coefficients  in $K$. 

Now suppose that $f$ has at least three distinct critical points. Without loss, we may   
assume that $0,1$, and $\infty$ are among them. In particular, by the lemma we  
 see that $f$ is equivalent to $f_u$ for some $u \in \bar K$. The remaining     
 critical point is $\phi(u) \in K$. By \eqref{Item: 2}, both solutions of $\phi(z) = 
 \phi(u)$  lie in $\PP^1(K)$, so that $u \in K$. That is, $f$ is equivalent to 
 a rational function with $K$-coefficients. 
 
 To prove \eqref{Item: 2} $\Leftrightarrow$ \eqref{Item: 3}, choose $y \in K$ and consider 
 \[
 \phi(z)=-\frac{z^2+2z}{2z+3}=y.
 \] 
Rearranging, we get a quadratic in $z$ with discriminant 
\[
(2y+2)^2- 4 \cdot 3y = (2y-1)^2 + 3.
 \]
Thus, we can solve for $z \in K$ if and only if $(2y-1)^2 + 3$ is a square in $K$. 
\end{proof}


\section{$p$-adic fields}
\label{Sec: p-adic}

Our proof of Proposition~\ref{Prop: p-adic} is split into the subcases $p = 2$,
$p = 3$, and $p > 3$. We want to show that $\QQ_p$ is not $\phi$-perfect for
$\phi(z){}={}-\frac{z^2+2z}{2z+3}$. This
amounts to determining whether $\phi(z) = y$ has a solution in $\PP^1(\QQ_p)$
for $y \in \QQ_p$.  Rearranging gives the quadratic equation $z^2 + 2(1+y)z + 3y
= 0$, which has discriminant
\begin{equation}
\label{Eq: discriminant}
\Delta = 4(y^2 - y + 1) = (2y - 1)^2 + 3.
\end{equation}
Determining if $\QQ_p$ is $\phi$-perfect now amounts to determining whether 
$\Delta$  is a square in $\QQ_p$ for every $y \in \QQ_p$. 

For $p = 2$, set $y = \frac{1}{2} + t$ with $t \in \ZZ_2$. Then \eqref{Eq: 
discriminant}  becomes 
\[
\Delta = 4t^2 + 3 \equiv 3 \pmod 4,
\]
which is not a square in $\QQ_2$. Hence $\phi(z) = \frac{1}{2} + t$ has no 
solution,  and $\QQ_2$ is not $\phi$-perfect. (It is worth noting that what we 
have  really proved is that the image of $\PP^1(\QQ_2)$ under $\phi$ is 
disjoint  from the set $\frac{1}{2} + \ZZ_2$.)

For $p =3$, we set $y = 2 + 3t$ with $t \in \ZZ_3$. Then $\ord_3(\Delta) = 1$,
which means $\Delta$ cannot be a square in $\QQ_3$.

Finally, we treat the case $p > 3$. The resultant of 
$\phi(z) = - \frac{z^2+2z}{2z+3}$ is $-3$, so this rational function may be 
reduced  modulo~$p$ to yield a quadratic function $\tilde \phi \in \FF_p(z)$. 
Note  that $\tilde \phi(0) = \tilde \phi(-2)$, so that $\tilde \phi$ fails to 
be injective  on $\PP^1(\FF_p)$. As $\PP^1(\FF_p)$ is a finite set, $\tilde \phi$ also 
fails  to be surjective. Choose $\tilde y \in \FF_p$ such that 
$\tilde \phi(z) = \tilde y$ has no solution in $\FF_p$, and choose a lift $y 
\in \ZZ_p$ such that $y \equiv \tilde y \pmod p$. It   
follows that $\phi(z) = y$ has no solution in $\ZZ_p$. It remains to show that 
$\phi(z) = y$ has no solution in $\QQ_p \smallsetminus \ZZ_p$. If $\phi(x) = y$ 
with $|x|_p > 1$, then 
\[
\left| \phi(x) \right|_p = |x|_p \cdot \left| \frac{1+2/x}{2 + 3/x} \right|_p = |x|_p > 1,
\]
which contradicts $y \in \ZZ_p$. Hence $\phi(z) = y$ has no solution in $\PP^1(\QQ_p)$, 
and we have proved that $\QQ_p$ is not $\phi$-perfect. 


\section{Further thoughts}

A general rational function of degree~$d > 2$ has $2d+1$ free parameters 
(coefficients)  and $2d-2$ critical points. Imposing the condition that 
$0,1,\infty$  are fixed and critical reduces this to $2d-5$ free parameters. If 
we  fix a set of $K$-rational critical points and look at the Wronskian, then the $2d-5$ 
free  coefficients for the function must satisfy $2d-5$ quadratic equations in 
$2d-5$  variables over $K$. In the case $d = 3$, in which $2d - 5 = 1$, we were 
able  to explicitly solve for the remaining critical point as an explicit 
function of  the free parameter. Is it possible to solve for the critical 
points as  explicit functions of the parameters for $d > 3$? 

B\'ezout's theorem gives an upper bound of $2^{2d-5}$ solutions for a general   
system of $2d-5$ conics, while Goldberg \cite{Goldberg_Catalan_1991} 
bounds the number of distinct solutions by the smaller quantity
\[
\frac{1}{d}\binom{2d-2}{d-1} \approx \frac{8}{\sqrt{\pi}d^{3/2}}2^{2d-5}. 
\]
This suggests a substantial amount of extra structure in our system of 
equations,  which may make it possible to give elementary proofs of the
theorem of Eremenko/Gabrielov in degree $d$ for other small $d>3$. 

\bibliographystyle{plain}
\bibliography{xander_bib}

\end{document}